\documentclass[12pt, a4paper]{article}



\usepackage[cp1251]{inputenc}
\usepackage[english]{babel}
\usepackage{amsthm, amsfonts, amsmath}
\usepackage{hyperref}

\hypersetup{
  colorlinks   = true, 
  urlcolor     = blue, 
  linkcolor    = blue, 
  citecolor    = blue 
}

\usepackage{setspace}
\onehalfspacing

\theoremstyle{definition}

\theoremstyle{plain}
\newtheorem{theorem}{Theorem}
\newtheorem{lemma}{Lemma}

\bibliographystyle{plain}

\begin{document}

\title{A Kolmogorov Consistency Theorem in the Multiple Probabilities Setting}
\date{\today}

\author{Victor Ivanenko, Illia Pasichnichenko \\ \textit{National Technical University of Ukraine}}

\maketitle

\begin{abstract}
We consider a system of weak* closed sets of finite-dimensional distributions. We show that a corresponding system of random variables can be defined on a probability space with a probability measure determined up to some set of measures, provided that the sets of finite-dimensional distributions are consistent.
\end{abstract}

\section{Introduction}

In this paper we provide necessary and sufficient conditions for consistency of weak* closed sets of finite-dimensional distributions. Specifically, suppose $T$ and $Y$ are nonempty sets, $\mathcal F$ is an algebra on $Y$. For each $n\in\mathbb{N}$ and finite sequence $t_1,\dots,t_n\in T$, let $V_{t_1,\dots,t_n}$ be a weak* closed set of finitely additive probability measures on $\left(Y^n,\mathcal{F}^n\right)$. Given that these sets of measures satisfy two consistency conditions, we show that there exist a collection of mappings $X_t\!:\Omega\to Y$ ($t\in T$) on the measurable space $\left(\Omega,\mathcal A\right)$ and a weak* closed set $P$ of finitely additive probabilities on $\left(\Omega,\mathcal A\right)$ such that the following condition is satisfied for all $n\in\mathbb{N}$ and $t_1,\dots,t_n\in T$. For any $p\in P$ there is $v\in V_{t_1,\dots,t_n}$ such that
    $$v\left(F\right) = p\left(\left[ X_{t_1},\dots,X_{t_n} \right] \in F \right) $$
for all $F\in\mathcal{F}^n$, and vice versa. Moreover, if each set $V_{t_1,\dots,t_n}$ consists of $\sigma$-additive probabilities on $\mathbb{R}^n$, then we can restrict $P$ to $\sigma$-additive measures. This extends to the multiple probabilities setting the classical Kolmogorov consistency theorem, in which every set $V_{t_1,\dots,t_n}$ and $P$ are singletons.

Sets of probability measures have been widely studied in applied mathematics. They were considered in order to study monotone capacities \cite{Shapley1971, Chateauneuf1989}. In mathematical statistics and in mathematical economics, they have been used to represent subjective prior beliefs when the information on which such beliefs are based is not good enough to represent them by a uniquely determined probability distribution (see, e.g., \cite{Ivanenko1986, Gilboa1989, Maccheroni2006}). They also have been used for studying the limit behavior of empirical averages while omitting the assumption of stochasticity \cite{Ivanenko2010}. Together with monotone capacities \cite{Maccheroni2005}, sets of probability measures increasingly attract attention of researchers in social sciences.

Our result shows that if finite-dimensional distributions are not uniquely determined, we still can define random variables on a probability space $\left(\Omega, \mathcal A, p \right)$. However, in this case the probability $p$ is determined up to a set $P$. This is true provided that sets of possible finite-dimensional distributions are consistent.

Consider the following example from mathematical statistics and decision theory. Suppose $T$ is the set of alternatives in a decision problem and $Y$ is the set of outcomes. Suppose the decision maker has ambiguous beliefs concerning the outcomes of each decision $t\in T$ represented by a set $V_t$ of probability measures on $\left(Y,\mathcal{F}\right)$ together with sets $V_{t_1,\dots,t_n}$ of joint probability distributions. Then our result shows that to each decision $t$ we can assign a function $X_t\!:\Omega\to Y$ and represent the decision maker's beliefs through a set $P$ of probability measures on ``states of nature'' $\omega\in\Omega$. This shows the formal equivalence between the two types of representations of uncertainty in a decision model.

\section{The consistency theorem}

Suppose $Y$ is a nonempty set, $\mathcal F$ is an algebra on $Y$, and $\mathcal P\left(Y,\mathcal F\right)$ is the set of all (finitely additive) probability measures on $\left(Y,\mathcal F\right)$. A set $V$ is called \textit{weak* closed} if for any $p_0\in\mathcal P\left(Y,\mathcal F\right)\setminus V$ there exist a collection $f_1,\dots,f_n$ of bounded measurable mappings $f_i\!:Y\to\mathbb R$ and a real number $\varepsilon>0$ such that for any $p\in V$
    $$\textstyle \left| \int\limits_Y\!f_i(y)\,\mathrm{d}p - \int\limits_Y\!f_i(y)\,\mathrm{d}p_0 \right| \geq \varepsilon $$
for some $i\in\overline{1,n}$.\footnote{This topology in $\mathcal P\left(Y,\mathcal F\right)$ is the weakest topology for which all mappings $p\to\int\!f\,\mathrm{d}p$ are continuous, where $p\in\mathcal P\left(Y,\mathcal F\right)$ and $f\!:Y\to\mathbb R$ is bounded and measurable (see \cite{Dunford1971} for details).} By $\mathcal P_\sigma\left(Y,\mathcal F\right)$ denote the subset of all $\sigma$-additive probability measures in $\mathcal P\left(Y,\mathcal F\right)$ endowed with relative topology.

Let $T$ be a nonempty set. For each $n\in\mathbb{N}$ and finite sequence $t_1,\dots,t_n\in T$, let $V_{t_1,\dots,t_n}$ be a nonempty weak* closed subset of $\mathcal P\left(Y^n,\mathcal F^n\right)$. Suppose the collection $\left\{ V_{t_1,\dots,t_n} \right\}$ of sets of finite-dimensional distributions satisfies the following consistency conditions.

\begin{enumerate}
  \item\label{cond1} Let $\pi$ be a permutation of numbers $1,2,\dots,n$ and denote by $f_\pi$ the one-to-one mapping from $Y^n$ on itself that takes each point $\left(y_1,\dots,y_n\right)$ to $\left(y_{\pi(1)},\dots,y_{\pi(n)}\right)$. Then for any $v_1\in V_{t_{\pi(1)},\dots,t_{\pi(n)}}$ there exists $v_2\in V_{t_1,\dots,t_n}$ such that
            $$v_1(F)=v_2\left(f_\pi^{-1} \left(F\right) \right), \;\;\; F\in \mathcal{F}^n.$$
  \item\label{cond2} For any $v_1\in V_{t_1,\dots,t_{n+m}}$ there exists $v_2\in V_{t_1,\dots,t_n}$ such that
      \begin{equation}\label{eq:1}
        v_1\left(F\times Y^m\right) = v_2 \left(F\right), \;\;\; F\in \mathcal{F}^n,
      \end{equation}
      and for any $v_2\in V_{t_1,\ldots,t_n}$ there exists $v_1\in V_{t_1,\ldots,t_{n+m}}$ such that \eqref{eq:1} holds.
\end{enumerate}

Suppose $\left(S_0,\Sigma_0\right)$ and $\left(S_1,\Sigma_1\right)$ are two measurable spaces and $X\!:S_0\to S_1$ is a measurable function. Then denote by $\hat{X}$ the map from $\mathcal P \left(S_0,\Sigma_0\right)$ to $\mathcal P \left(S_1,\Sigma_1\right)$ that takes each $p$ to $q$ such that
    $$q(F) = p(X\in F), \;\;\; F\in\Sigma_1.$$

We can now state our main result.

\begin{theorem}
    The collection $\left\{ V_{t_1,\dots,t_n} \right\}$ satisfies conditions \ref{cond1} and \ref{cond2} if and only if there exist a set $\Omega$, a $\sigma$-algebra $\mathcal A$ on $\Omega$, a weak* closed set $P\subset\mathcal P\left(\Omega,\mathcal A\right)$, and a collection of mappings $X_t\!:\Omega\to Y$ ($t\in T$) such that for all $n\in\mathbb N$ and $t_1,\dots,t_n\in T$
    \begin{equation}\label{eq:2}
           \hat{\left[ X_{t_1},\dots,X_{t_n} \right]} \left(P\right) = V_{t_1,\dots,t_n}.
    \end{equation}
    Moreover, if each set $V_{t_1,\dots,t_n}$ consists of $\sigma$-additive probability measures on $\left(\mathbb{R}^n, \mathcal B \left(\mathbb{R}^n\right)\right)$, then there exists a closed subset $P_\sigma$ of $\mathcal P_\sigma\left(\Omega,\mathcal A\right)$ such that \eqref{eq:2} holds with $P$ substituted by $P_\sigma$.
\end{theorem}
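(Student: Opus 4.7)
The plan is to prove both implications with the canonical choice $\Omega = Y^T$, $X_t(\omega) = \omega(t)$, $\mathcal{A}_0$ the algebra of cylinder sets with bases in $\mathcal{F}^n$, and $\mathcal{A} = \sigma(\mathcal{A}_0)$. The reverse implication is routine: if $P$ and $\{X_t\}$ satisfy \eqref{eq:2}, then because pushforward commutes with coordinate permutations and sub-tuple projections, conditions \ref{cond1} and \ref{cond2} transfer directly from $P$ to the pushforward sets $V_{t_1,\dots,t_n}$.

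For the forward direction, I would define
$$P = \bigl\{ p \in \mathcal{P}(\Omega,\mathcal{A}) : \hat{\left[X_{t_1},\dots,X_{t_n}\right]}(p) \in V_{t_1,\dots,t_n} \text{ for all } n, t_1,\dots,t_n \bigr\}.$$
The inclusion $\hat{\left[X_{t_1},\dots,X_{t_n}\right]}(P) \subseteq V_{t_1,\dots,t_n}$ is immediate, and $P$ is weak* closed because each pushforward map is weak* continuous (since $\int f \, d\hat{X}(p) = \int f \circ X \, dp$), so $P$ is an intersection of preimages of weak* closed sets. The substantive work is surjectivity: given $v \in V_{t_1,\dots,t_n}$, exhibit $p \in P$ with that pushforward. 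I would build $p$ from a \emph{coherent family} $(v_\tau)$ indexed by all finite enumerated tuples from $T$, with $v_\tau \in V_\tau$, satisfying the permutation identity from \ref{cond1} and the marginal identity from \ref{cond2} whenever tuples are related by permutation or extension, and with $v_{(t_1,\dots,t_n)} = v$; such a family defines a cylinder measure on $\mathcal{A}_0$ by $p(\{[X_{s_1},\dots,X_{s_m}] \in F\}) := v_{(s_1,\dots,s_m)}(F)$, well-defined by coherence, which extends finitely additively to $\mathcal{A}$.

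The coherent family itself is produced by weak* compactness. Each $V_\tau$ is weak* closed in $\mathcal{P}(Y^{|\tau|}, \mathcal{F}^{|\tau|})$, which is weak* compact by Banach--Alaoglu, so $\prod_\tau V_\tau$ is compact by Tychonoff, and the coherence constraints together with $v_{(t_1,\dots,t_n)} = v$ cut out a closed subset $K_v$. For the finite intersection property, given finitely many tuples $\tau_1,\dots,\tau_k$, concatenate them (with the prefix $(t_1,\dots,t_n)$) into a single tuple $\tau^*$; condition \ref{cond2} (extension half) supplies $v^* \in V_{\tau^*}$ restricting to $v$ on the first $n$ coordinates, and applying \ref{cond1} to permute followed by \ref{cond2} (projection half) produces measures on each $\tau_i$ that are mutually coherent because they descend from the common $v^*$, with the remaining coordinates of the product filled arbitrarily. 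For the $\sigma$-additive case on $\mathbb{R}^n$, the same construction yields a coherent family of $\sigma$-additive finite-dimensional Borel measures, and the classical Kolmogorov extension theorem upgrades the cylinder measure to a $\sigma$-additive probability on $\sigma(\mathcal{A}_0)$. The main obstacle is this finite intersection step: the \emph{bidirectional} nature of \ref{cond2} is essential --- one needs both the extension half (to lift $v$ to $v^*$) and the projection half (to descend $v^*$ to each $\tau_i$) --- and \ref{cond1} must be invoked carefully to reconcile the various enumerations of the same set of indices.
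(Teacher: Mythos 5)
Your proposal is correct, and its skeleton --- the canonical $\Omega=Y^T$ with coordinate maps $X_t$, the set $P$ defined as the intersection of preimages of the $V_{t_1,\dots,t_n}$ under the (weak* continuous) pushforward maps, and a compactness/finite-intersection argument driven by concatenating tuples and invoking the extension half of condition \ref{cond2} together with condition \ref{cond1} --- is the same as the paper's. The one structural difference is where compactness is applied. You work in the product $\prod_\tau V_\tau$ (Tychonoff) to extract a coherent family of finite-dimensional distributions passing through the prescribed $v$, and only afterwards assemble a finitely additive measure on the cylinder algebra and extend it to $\mathcal A$. The paper instead applies compactness directly in $\mathcal P\left(\Omega,\mathcal A\right)$: a Hahn--Banach lemma (its Lemma \ref{le:1}) first shows that each $v\in V_\tau$ has a nonempty weak* closed preimage $v^{-1}\subset\mathcal P\left(\Omega,\mathcal A\right)$, and then $\left\{v^{-1}\cap V_\beta^{-1}\right\}$ is shown to be a centered system of closed sets in that compact space, with the same concatenation trick furnishing a common element. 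The paper's packaging spares it two verifications that your route still owes: (i) that a coherent family yields a \emph{well-defined} finitely additive set function on the cylinder algebra (two representations of the same cylinder set over different tuples must receive the same value, which is where conditions \ref{cond1} and \ref{cond2} enter once more), and (ii) that this set function extends from the cylinder algebra to all of $\mathcal A$, which in the finitely additive setting is not free and is exactly the content of the paper's Lemma \ref{le:1}. Neither is a gap --- both are standard --- but they are the only nontrivial analytic inputs and should be stated explicitly. Your handling of the converse implication and of the $\sigma$-additive case (classical Kolmogorov applied to the consistent family of $\sigma$-additive finite-dimensional distributions) coincides with the paper's.
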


\section{Proof of Theorem 1}

Consider the set $\Omega$ of all mappings from $T$ to $Y$. A set $A$ is a \textit{cylinder set} if
    $$ A = \left\{ \omega\in\Omega\!: \left[\omega\left(t_1\right),\dots,\omega\left(t_n\right)\right] \in F \right\} $$
for some $n\in\mathbb N$, $t_1,\dots,t_n \in T$ and $F\in\mathcal F^n$. Let $\mathcal A$ be the smallest $\sigma$-algebra that contains all cylinder sets.

For arbitrary $n\in\mathbb N$ and $t_1,\dots,t_n\in T$ by $\Phi_{t_1,\dots,t_n}\!:\Omega\to Y^n$ denote the map
    $$ \omega\mapsto\Phi_{t_1,\dots,t_n}(\omega) = \left[\omega\left(t_1\right),\dots,\omega\left(t_n\right)\right]. $$
Let $V_{t_1,\dots,t_n}^{-1}$ be the set of all measures $p$ in $\mathcal P\left(\Omega,\mathcal A\right)$ such that $\hat{\Phi}_{t_1,\dots,t_n}(p)\in V_{t_1,\dots,t_n}$. We shall see that $V_{t_1,\dots,t_n}^{-1}$ is nonempty.

If $\Sigma$ is an algebra on $S$, by $B\left(S,\Sigma\right)$ denote the Banach space of all uniform limits of sequences of simple functions \cite{Dunford1971}.

\begin{lemma}\label{le:1}
    Let $\Sigma_0\subseteq\Sigma_1$ be two algebras in $S$. For any $p_0\in\mathcal{P}\left(S,\Sigma_0\right)$ there exists an extension $p_1\in\mathcal{P}\left(S,\Sigma_1\right)$.
\end{lemma}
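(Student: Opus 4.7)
The plan is to reduce the extension problem to a Hahn--Banach extension of a bounded linear functional on the Banach spaces $B\left(S,\Sigma_0\right)\subseteq B\left(S,\Sigma_1\right)$, both equipped with the supremum norm. First I would observe that integration against $p_0$ defines a bounded linear functional $L_0$ on $B\left(S,\Sigma_0\right)$ of norm exactly $1$: for a simple $\Sigma_0$-measurable function $f=\sum_i c_i \mathbf{1}_{A_i}$ with disjoint $A_i$ partitioning $S$, one has $\left|L_0(f)\right|\leq \sum_i |c_i|\,p_0\left(A_i\right)\leq \|f\|_\infty$, and the bound is attained by the constant function $1$, on which $L_0$ evaluates to $1$. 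This extends by continuity from simple functions to all of $B\left(S,\Sigma_0\right)$.

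Next I would apply the Hahn--Banach theorem to obtain a linear functional $L_1$ on $B\left(S,\Sigma_1\right)$ which agrees with $L_0$ on $B\left(S,\Sigma_0\right)$ and satisfies $\left\|L_1\right\|=1$. Define $p_1(A):=L_1\left(\mathbf{1}_A\right)$ for $A\in\Sigma_1$. Finite additivity of $p_1$ is immediate from linearity of $L_1$ together with $\mathbf{1}_{A\sqcup B}=\mathbf{1}_A+\mathbf{1}_B$ for disjoint sets, and $p_1(S)=L_1(1)=L_0(1)=1$, while $p_1$ restricted to $\Sigma_0$ coincides with $p_0$ by construction.

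The only delicate point, and the main obstacle, is to verify that $p_1$ is nonnegative, since Hahn--Banach a priori preserves neither positivity nor real-valuedness on indicators. I would deduce positivity from the norm condition: for any $A\in\Sigma_1$ the function $g:=2\mathbf{1}_A-1$ lies in the unit ball of $B\left(S,\Sigma_1\right)$, hence
    $$\left|2\,p_1(A)-1\right|=\left|L_1(g)\right|\leq \left\|L_1\right\|\cdot\|g\|_\infty\leq 1,$$
which forces $0\leq p_1(A)\leq 1$. (An entirely analogous argument shows $L_1$ is real-valued on real-valued functions.) This establishes that $p_1$ is a finitely additive probability measure on $\left(S,\Sigma_1\right)$ extending $p_0$, completing the proof.
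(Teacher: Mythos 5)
Your proof is correct and follows essentially the same route as the paper: integration against $p_0$ gives a norm-one functional on $B\left(S,\Sigma_0\right)$, Hahn--Banach yields a norm-preserving extension, and positivity is recovered from the norm condition (the paper tests against $\mathbf{1}_S-f$ for general $f\ge 0$ where you test against $2\mathbf{1}_A-1$ for indicators, but this is the same trick). No substantive differences.
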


\begin{proof}
     Denote by $\varphi_0$ the linear functional on $B\left(S,\Sigma_0\right)$ such that $\varphi_0(f)=\int\!f\,\mathrm{d}p_0$. Clearly, if $\left\|f\right\|\le 1$, then $\left|\varphi_0\left(f\right)\right|\le 1$. Therefore we have
        $$\left\|\varphi_0\right\| = \sup_{\left\|f\right\|\le 1} \left|\varphi_0\left(f\right)\right| = 1.$$

    Let $\varphi_1$ be the norm preserving extension of $\varphi_0$ to $B\left(S,\Sigma_1\right)$. We shall prove that $\varphi_1(f)\ge0$, when $f\in B\left(S,\Sigma_1\right)$ and $f\ge0$. Without loss of generality it can be assumed that $\left\|f\right\| = 1$. Since $\left\|\mathbf{1}_S - f\right\| \le 1$ and $\left\|\varphi_1\right\| = 1$, we obtain
            $$\left|\varphi_1\left(\mathbf{1}_S-f\right)\right| = \left|1-\varphi_1\left(f\right)\right| \le 1.$$
    This implies $\varphi_1(f)\ge 0$.

    Put $p_1(A)=\varphi_1\left(\mathbf{1}_A\right)$ for all $A\in\Sigma_1$. Obviously, $p_1$ is a probability measure and an extension of $p_0$.
\end{proof}

Now fix $v\in V_{t_1,\dots,t_n}$ and by $p_0$ denote the probability measure on the algebra $\Phi_{t_1,\dots,t_n}^{-1}\left(\mathcal F^n\right)$ such that
    $$ p_0 \left( \left\{\omega\!: \left[\omega\left(t_1\right),\dots,\omega\left(t_n\right)\right] \in F \right\}\right) = v(F), \;\;\; F\in\mathcal F^n.$$
By lemma \ref{le:1}, there exists an extension $p_1$ of $p_0$ to $\mathcal{A}$. Clearly, $p_1\in V_{t_1,\dots,t_n}^{-1}$. Moreover, since $q$ is arbitrary, we have $\hat{\Phi}_{t_1,\dots,t_n}\left(V_{t_1,\dots,t_n}^{-1}\right) = V_{t_1,\dots,t_n}$.

The next step is to prove that $V_{t_1,\dots,t_n}^{-1}$ is weak* closed. Consider an arbitrary $p_0\in\mathcal{P}\left(\Omega,\mathcal{A}\right)\setminus V_{t_1,\dots,t_n}^{-1}$. Since $\Phi_{t_1,\dots,t_n}(p_0)=v_0\notin V_{t_1,\dots,t_n}$, it follows that there exist bounded measurable mappings $g_i\!:Y^n\to\mathbb{R}^n$ and a number $\varepsilon>0$ such that for any $v\in V_{t_1,\dots,t_n}$ we have
    $$\textstyle \left| \int\limits_{Y^n}\!g_i(x)\,\mathrm{d}v - \int\limits_{Y^n}\!g_i(x)\,\mathrm{d}v_0 \right| \ge \varepsilon $$
for some $i\in\overline{1,n}$. Since
    $$\textstyle \int\limits_\Omega\!g_i\circ\Phi_{t_1,\dots,t_n}(\omega)\,\mathrm{d}p = \int\limits_{Y^n}\!g_i(x)\,\mathrm{d}\hat{\Phi}_{t_1,\dots,t_n}(p),\;\;\; p\in\mathcal{P}\left(\Omega,\mathcal{A}\right),$$
it follows that the neighborhood of $p_0$ defined by $g_1\circ\Phi_{t_1,\dots,t_n},\dots,g_n\circ\Phi_{t_1,\dots,t_n}$ and $\varepsilon$ has an empty intersection with $V_{t_1,\dots,t_n}^{-1}$. Hence $V_{t_1,\dots,t_n}^{-1}$ is closed.

By definition, put
    $$ P = \bigcap_{n\in\mathbb N,\: t_1,\dots,t_n\in T} V_{t_1,\dots,t_n}^{-1}. $$
Clearly, $P$ is closed. It can be shown that $\left\{V_{t_1,\dots,t_n}^{-1}\right\}$ is a centered system of closed sets in the compact space $\mathcal P \left(\Omega,\mathcal A\right)$, so that $P\neq\emptyset$. In lemma \ref{le:4} we shall prove a stronger statement. First, we need two additional lemmas.

\begin{lemma}\label{le:2}
    Suppose $\pi$ is a permutation of numbers $1,2,\dots,n$. Then $V_{t_{\pi(1)},\dots,t_{\pi(n)}}^{-1} = V_{t_1,\dots,t_n}^{-1}$.
\end{lemma}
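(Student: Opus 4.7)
The plan is to reduce the lemma to two ingredients: the intertwining identity $\hat{\Phi}_{t_{\pi(1)},\dots,t_{\pi(n)}} = \hat{f}_\pi \circ \hat{\Phi}_{t_1,\dots,t_n}$ on $\mathcal{P}\left(\Omega,\mathcal{A}\right)$, and the set equality $V_{t_{\pi(1)},\dots,t_{\pi(n)}} = \hat{f}_\pi\!\left(V_{t_1,\dots,t_n}\right)$. Once both are in hand the lemma follows immediately: because $f_\pi$ is a bijection of $Y^n$ with inverse $f_{\pi^{-1}}$, the induced map $\hat{f}_\pi$ is a bijection of $\mathcal{P}\left(Y^n,\mathcal{F}^n\right)$, so $\hat{\Phi}_{t_{\pi(1)},\dots,t_{\pi(n)}}(p)\in V_{t_{\pi(1)},\dots,t_{\pi(n)}}$ rewrites as $\hat{f}_\pi\!\left(\hat{\Phi}_{t_1,\dots,t_n}(p)\right)\in \hat{f}_\pi\!\left(V_{t_1,\dots,t_n}\right)$, which is equivalent to $\hat{\Phi}_{t_1,\dots,t_n}(p)\in V_{t_1,\dots,t_n}$.

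The intertwining identity is obtained by unwinding the definitions of $\Phi$ and $f_\pi$ pointwise: for any $\omega\in\Omega$ one has $f_\pi\!\left(\Phi_{t_1,\dots,t_n}(\omega)\right) = f_\pi\!\left(\omega(t_1),\dots,\omega(t_n)\right) = \left(\omega(t_{\pi(1)}),\dots,\omega(t_{\pi(n)})\right) = \Phi_{t_{\pi(1)},\dots,t_{\pi(n)}}(\omega)$, and pushing measures forward along this equality of measurable maps gives the identity on $\mathcal{P}\left(\Omega,\mathcal{A}\right)$.

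For the set equality, I would rewrite condition \ref{cond1} in pushforward form: since $v_2\!\left(f_\pi^{-1}(F)\right) = \hat{f}_\pi(v_2)(F)$, the condition asserts precisely $V_{t_{\pi(1)},\dots,t_{\pi(n)}}\subseteq \hat{f}_\pi\!\left(V_{t_1,\dots,t_n}\right)$. To get the reverse inclusion I would re-apply condition \ref{cond1} to the relabeled sequence $(s_1,\dots,s_n):=(t_{\pi(1)},\dots,t_{\pi(n)})$ with the permutation $\pi^{-1}$; this yields $V_{t_1,\dots,t_n}\subseteq \hat{f}_{\pi^{-1}}\!\left(V_{t_{\pi(1)},\dots,t_{\pi(n)}}\right)$, and applying $\hat{f}_\pi$ and using $\hat{f}_\pi\circ\hat{f}_{\pi^{-1}}=\mathrm{id}$ gives $\hat{f}_\pi\!\left(V_{t_1,\dots,t_n}\right)\subseteq V_{t_{\pi(1)},\dots,t_{\pi(n)}}$.

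The only genuinely subtle point is this last step: condition \ref{cond1} as stated is one-sided (``for any $v_1$ there exists $v_2$''), and produces only the inclusion $V_{t_{\pi(1)},\dots,t_{\pi(n)}}\subseteq \hat{f}_\pi\!\left(V_{t_1,\dots,t_n}\right)$; one must recognize that the full equality requires invoking the hypothesis a second time, with the roles of $\pi$ and $\pi^{-1}$ exchanged. Everything else in the argument is a routine unwinding of the definitions of $\hat{\Phi}$, $\hat{f}_\pi$, and $V_{t_1,\dots,t_n}^{-1}$.
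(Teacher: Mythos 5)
Your proof is correct and follows essentially the same route as the paper's: both arguments hinge on the pointwise identity $\Phi_{t_{\pi(1)},\dots,t_{\pi(n)}} = f_\pi\circ\Phi_{t_1,\dots,t_n}$ (which the paper invokes in the form ``the two events \dots coincide'') and on applying condition \ref{cond1} a second time with $\pi$ inverted to obtain the converse inclusion. The only cosmetic difference is that you work entirely at the level of pushforward maps via the set identity $V_{t_{\pi(1)},\dots,t_{\pi(n)}}=\hat{f}_\pi\left(V_{t_1,\dots,t_n}\right)$, whereas the paper matches a specific $p_2\in V_{t_1,\dots,t_n}^{-1}$ to $p_1$ and observes that they agree on the subalgebra $\Phi_{t_1,\dots,t_n}^{-1}\left(\mathcal F^n\right)$.
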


\begin{proof}
    Choose an arbitrary $p_1\in V_{t_{\pi(1)},\dots,t_{\pi(n)}}^{-1}$ and by $v_1$ denote $\hat{\Phi}_{t_{\pi(1)},\dots,t_{\pi(n)}}(p_1)$. Let $v_2\in V_{t_1,\dots,t_n}$ corresponds to $v_1$ in the sense of condition \ref{cond1}. There exists $p_2\in V_{t_1,\dots,t_n}^{-1}$ such that $\hat{\Phi}_{t_1,\dots,t_n}(p_2)=v_2$. Then
        $$p_1\left(\Phi_{t_{\pi(1)},\dots,t_{\pi(n)}}\in F\right) = v_1\left(F\right) = v_2\left(f^{-1}_\pi\left(F\right)\right) = p_2\left(\Phi_{t_1,\dots,t_n}\in f^{-1}_\pi\left(F\right)\right)$$
    for any $F\in\mathcal F^n$. Since the two events in the left and the right sides of the equation coincide, we get $p_1 = p_2$ on $\Phi_{t_{\pi(1)},\dots,t_{\pi(n)}}^{-1}\left(\mathcal F^n\right) = \Phi_{t_1,\dots,t_n}^{-1}\left(\mathcal F^n\right)$. The set $V_{t_1,\dots,t_n}^{-1}$ contains all measures $p\in\mathcal{P}\left(\Omega,\mathcal{A}\right)$ such that $p=p_2$ on this subalgebra. Thus we have $p_1\in V_{t_1,\dots,t_n}^{-1}$ and $V_{t_{\pi(1)},\dots,t_{\pi(n)}}^{-1} \subseteq V_{t_1,\dots,t_n}^{-1}$. By inverting $\pi$, the converse follows immediately.
\end{proof}

For two finite sequences $\alpha$ and $\beta$ of elements of $T$ the notation $\alpha\ge\beta$ means the following. By changing the order of elements the first $n$ terms of $\alpha$ can be set pointwise equal to the $n$ terms of $\beta$, where $n$ is the length of $\beta$.

\begin{lemma}\label{le:3}
    If $\alpha\ge\beta$, then $V_\alpha^{-1}\subseteq V_\beta^{-1}$.
\end{lemma}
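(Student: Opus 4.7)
The plan is to reduce to the case where $\beta$ is an initial segment of $\alpha$ by appealing to Lemma \ref{le:2}, and then use consistency condition \ref{cond2} to extract the marginal.

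First, I would unpack the definition $\alpha\ge\beta$. Write $\beta=(t_1,\dots,t_n)$ and let $\alpha$ have length $n+m$. By hypothesis there is a permutation $\pi$ of $\alpha$ whose first $n$ entries are $t_1,\dots,t_n$; call the remaining entries $s_1,\dots,s_m$. By Lemma \ref{le:2}, applied to the permutation $\pi$, we have
    $$V_\alpha^{-1} = V_{t_1,\dots,t_n,s_1,\dots,s_m}^{-1},$$
so it suffices to prove $V_{t_1,\dots,t_n,s_1,\dots,s_m}^{-1}\subseteq V_{t_1,\dots,t_n}^{-1}$.

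Next, pick an arbitrary $p\in V_{t_1,\dots,t_n,s_1,\dots,s_m}^{-1}$ and set $v_1=\hat{\Phi}_{t_1,\dots,t_n,s_1,\dots,s_m}(p)\in V_{t_1,\dots,t_n,s_1,\dots,s_m}$. By condition \ref{cond2} there exists $v_2\in V_{t_1,\dots,t_n}$ with $v_1(F\times Y^m)=v_2(F)$ for every $F\in\mathcal F^n$. The key observation is that for every such $F$,
    $$\{\omega:\Phi_{t_1,\dots,t_n}(\omega)\in F\} = \{\omega:\Phi_{t_1,\dots,t_n,s_1,\dots,s_m}(\omega)\in F\times Y^m\},$$
because the extra coordinates $\omega(s_j)$ are unconstrained. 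Applying $p$ to both sides yields $\hat{\Phi}_{t_1,\dots,t_n}(p)(F)=v_1(F\times Y^m)=v_2(F)$, hence $\hat{\Phi}_{t_1,\dots,t_n}(p)=v_2\in V_{t_1,\dots,t_n}$, i.e.\ $p\in V_{t_1,\dots,t_n}^{-1}$.

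There is no real obstacle here: the lemma is essentially a bookkeeping consequence of the two preparatory results. The only subtlety is keeping the notation for $\alpha\ge\beta$ straight, which is handled by Lemma \ref{le:2} reducing the general case to the ``append coordinates'' case covered directly by condition \ref{cond2}.
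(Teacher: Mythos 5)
Your proposal is correct and follows essentially the same route as the paper: reduce via Lemma \ref{le:2} to the case where $\beta$ is an initial segment of $\alpha$, then apply condition \ref{cond2} and the identity $\Phi_{\beta}^{-1}(F)=\Phi_{\alpha}^{-1}(F\times Y^m)$ to conclude. Your version is in fact marginally cleaner, since you compute $\hat{\Phi}_{\beta}(p)=v_2$ directly rather than introducing an auxiliary measure $p_2\in V_\beta^{-1}$ agreeing with $p$ on the subalgebra $\Phi_\beta^{-1}\left(\mathcal F^n\right)$ as the paper does.
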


\begin{proof}
    Let $n$ and $n+m$ be the lengths of $\beta$ and $\alpha$ respectively. From the previous lemma it follows that with no loss of generality we can assume that the first $n$ terms of $\alpha$ and $\beta$ coincide. Fix $p_1\in V_\alpha^{-1}$ and choose $v_1\in V_\alpha$ such that $v_1=\hat{\Phi}_\alpha(p_1)$. Let $v_2\in V_\beta$ corresponds to $v_1$ in the sense of condition \ref{cond2} and $p_2\in V_\beta^{-1}$ be such that $\hat{\Phi}_\beta(p_2)=q_2$. Then
        $$ p_1\left( \Phi_\alpha \in F\times Y^m \right) = v_1\left( F\times Y^m\right) = v_2\left( F \right) = p_2\left( \Phi_\beta \in F \right) $$
    for any $F\in\mathcal F^n$. Thus we have $p_1=p_2$ on $\Phi_\beta^{-1}\left(\mathcal F^n\right)$, which implies $p_1\in V_\beta^{-1}$.
\end{proof}

By construction, $\hat{\Phi}_{t_1,\dots,t_n}(P)\subseteq V_{t_1,\dots,t_n}$. Let us check the converse inclusion.

\begin{lemma}\label{le:4}
    $\hat{\Phi}_\alpha(P)\supseteq V_\alpha$ for any finite sequence $\alpha$ of elements of $T$.
\end{lemma}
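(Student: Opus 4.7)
The plan is to fix $v_0\in V_\alpha$ and produce some $p\in P$ with $\hat{\Phi}_\alpha(p)=v_0$ by a compactness argument. For every finite sequence $\beta$ of elements of $T$ set
$$ U_\beta = V_\beta^{-1}\cap \hat{\Phi}_\alpha^{-1}\!\left(\{v_0\}\right). $$
I would first check that each $U_\beta$ is weak* closed in $\mathcal{P}(\Omega,\mathcal{A})$. The set $V_\beta^{-1}$ is already known to be closed. The set $\hat{\Phi}_\alpha^{-1}(\{v_0\})$ is closed because the map $\hat{\Phi}_\alpha$ is continuous in the weak* topology (the identity $\int g\circ\Phi_\alpha\,\mathrm{d}p=\int g\,\mathrm{d}\hat{\Phi}_\alpha(p)$, which has already appeared in the proof, shows this directly), and $\{v_0\}$ is closed since the weak* topology separates measures via integrals of indicator functions.

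The heart of the argument is to show that the family $\{U_\beta\}$ is centered. Given finitely many sequences $\beta_1,\dots,\beta_k$, I would form a single finite sequence $\gamma$ which, after a suitable permutation, has $\alpha$ as its initial segment and contains each $\beta_i$ as a subsequence. Using Lemma \ref{le:2} and Lemma \ref{le:3} one obtains $V_\gamma^{-1}\subseteq V_{\beta_i}^{-1}$ for every $i$. Now apply condition \ref{cond2} in the ``and vice versa'' direction: since the arrangement makes $\alpha$ the initial block of $\gamma$, there exists $v_\gamma\in V_\gamma$ whose marginal on the first coordinates equals $v_0$. Taking any $p\in V_\gamma^{-1}$ with $\hat{\Phi}_\gamma(p)=v_\gamma$ (which exists by the remarks preceding Lemma \ref{le:2}), the identity $p(\Phi_\alpha\in F)=p(\Phi_\gamma\in F\times Y^m)=v_\gamma(F\times Y^m)=v_0(F)$ gives $\hat{\Phi}_\alpha(p)=v_0$. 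Hence $p\in U_{\beta_1}\cap\dots\cap U_{\beta_k}$.

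Finally, $\mathcal{P}(\Omega,\mathcal{A})$ is weak* compact by Alaoglu's theorem, being a weak* closed subset of the unit ball of the dual of $B(\Omega,\mathcal{A})$. A centered family of closed subsets of a compact space has nonempty intersection, so $\bigcap_\beta U_\beta\neq\emptyset$. Any element $p$ of this intersection lies in every $V_\beta^{-1}$, hence in $P$, and satisfies $\hat{\Phi}_\alpha(p)=v_0$ as required.

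I expect the main obstacle to be the bookkeeping in the centered-family step: correctly building $\gamma$, reducing to the case where $\alpha$ sits as an initial block via Lemma \ref{le:2}, and treating possible overlap between $\alpha$ and the sequences $\beta_i$ so that condition \ref{cond2} can be invoked to produce $v_\gamma$ projecting exactly onto $v_0$.
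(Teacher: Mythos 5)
Your proposal is correct and follows essentially the same route as the paper: the set you call $\hat{\Phi}_\alpha^{-1}(\{v_0\})$ is the paper's $v^{-1}$, your family $\{U_\beta\}$ is exactly the paper's centered system $\{v^{-1}\cap V_\beta^{-1}\}$, and the witness for the finite intersection is produced in the same way, by concatenating $\alpha$ with the $\beta_i$'s, invoking condition \ref{cond2} to lift $v_0$ to $V_\gamma$, and using Lemma \ref{le:3} to land in each $V_{\beta_i}^{-1}$. The only (harmless) difference is that you spell out the continuity of $\hat{\Phi}_\alpha$ to justify that $v^{-1}$ is closed, where the paper leaves this as routine.
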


\begin{proof}
    Fix $v\in V_\alpha$ and show that $\hat{\Phi}_\alpha(p^*)=v$ for some $p^*\in P$. By $v^{-1}$ denote the set of all $p\in\mathcal{P}\left(\Omega,\mathcal{A}\right)$ such that $\hat{\Phi}_\alpha(p)=v$. It can be shown in the usual way that $v^{-1}$ is nonempty and closed. Suppose $\beta^{\left(1\right)},\dots,\beta^{\left(k\right)}$ are finite sequences of elements of $T$ and $\gamma$ is the concatenation of $\alpha,\beta^{\left(1\right)},\dots,\beta^{\left(k\right)}$. Then condition \ref{cond2} implies that there exists $v_1\in V_\gamma$ such that
        $$ v_1\left( F\times Y^{m_1+\cdots+m_k} \right) = v\left( F \right),\;\;\; F\in\mathcal{F}^n,$$
    where $m_i$ is the length of $\beta^{\left(i\right)}$ and $n$ is the length of $\alpha$.
    Further, for $p_1\in V_\gamma^{-1}$ such that $\hat{\Phi}_\alpha(p_1)=v_1$ we have
        $$ p_1\left( \Phi_\alpha \in F \right) = p_1\left( \Phi_\gamma \in F\times Y^{m_1+\cdots+m_k} \right) = v_1\left( F \times Y^{m_1+\cdots+m_k} \right) = v\left( F \right), $$
    so that $p_1\in v^{-1}$. On the other hand, since $\gamma\geq\beta^{\left(i\right)}$, we have $V_\gamma^{-1}\subseteq A$ by lemma \ref{le:3}, where
        $$ A = \bigcap_{i=1}^k V_{\beta^{\left(i\right)}}^{-1}. $$
    Hence $p_1\in A$, so that the set $v^{-1}\cap A$ is not empty. Since $\beta^{\left(1\right)},\dots,\beta^{\left(k\right)}$ are arbitrary, this implies that $\left\{v^{-1} \cap V_{t_1,\dots,t_n}^{-1}\right\}$ is a centered system of closed sets. Therefore it has nonempty intersection, which coincides with $v^{-1} \cap P$. Let $p^*$ be any element of $v^{-1} \cap P$.
\end{proof}

Now for all $t\in T$ and $\omega\in\Omega$ put $X_t(\omega) = \omega(t)$, so that
    $$ \Phi_{t_1,\dots,t_n} (\omega) = \left[X_{t_1},\dots,X_{t_n}\right] (\omega). $$
The equality $\hat{\Phi}_{t_1,\dots,t_n}\left(P\right) = V_{t_1,\dots,t_n}$ shows that $\Omega$, $\mathcal A$, $P$, and $X_t$ satisfy the desired property.

Clearly, the consistency conditions are necessary for such representation.

For the second part of the theorem, fix $p\in P$ and to each $n\in\mathbb{N}$ and $t_1,\dots,t_n\in T$ assign the finite-dimensional distribution $\hat{\Phi}_{t_1,\dots,t_n}\left(p\right)$. By assumption, this distribution is $\sigma$-additive. Obviously, this collection satisfies the conditions of the classical Kolmogorov consistency theorem. Hence there exists a $\sigma$-additive probability measure $p'$ on $\left(\Omega,\mathcal A\right)$ such that
    $$ \hat{\Phi}_{t_1,\dots,t_n}\left(p'\right) = \hat{\Phi}_{t_1,\dots,t_n}\left(p\right) $$
for all $n\in\mathbb N$ and $t_1,\dots,t_n\in T$. By the definition of $P$, we have $p'\in P$. Clearly, $P_\sigma = P\cap\mathcal P_\sigma \left(\Omega,\mathcal A\right)$ satisfies equation \eqref{eq:2}. This completes the proof.

\newpage

\bibliography{mybibfile}

\begin{thebibliography}{1}

\bibitem{Chateauneuf1989}
Alain Chateauneuf and Jean-Yves Jaffray.
\newblock {Some characterizations of lower probabilities and other monotone
  capacities through the use of M\"{o}bius inversion}.
\newblock {\em Mathematical Social Sciences}, 17(3):263--283, June 1989.

\bibitem{Dunford1971}
Nelson Dunford and Jacob~T Schwartz.
\newblock {\em Linear operators, Part 1}.
\newblock Wiley-interscience New York, 1971.

\bibitem{Maccheroni2005}
Massimo~Marinacci {Fabio Maccheroni}.
\newblock {A Strong Law of Large Numbers for Capacities}.
\newblock {\em The Annals of Probability}, 33(3):1171--1178, 2005.

\bibitem{Gilboa1989}
I~Gilboa and D~Schmeidler.
\newblock {Maxmin expected utility with non-unique prior}.
\newblock {\em Journal of Mathematical Economics}, 18(2):141--153, 1989.

\bibitem{Ivanenko2010}
V.~I. Ivanenko.
\newblock {\em {Decision Systems and Nonstochastic Randomness}}.
\newblock Springer, New York, 2010.

\bibitem{Ivanenko1986}
VI~Ivanenko and VA~Labkovsky.
\newblock {On the functional dependence between the available information and
  the chosen optimality principle}.
\newblock volume~81, pages 388--392. Springer-Verlag, Berlin, 1986.

\bibitem{Maccheroni2006}
Fabio Maccheroni, Massimo Marinacci, and Aldo Rustichini.
\newblock {Ambiguity Aversion, Robustness, and the Variational Representation
  of Preferences}.
\newblock {\em Econometrica}, 74(6):1447--1498, 2006.

\bibitem{Shapley1971}
Lloyd~S. Shapley.
\newblock {Cores of convex games}.
\newblock {\em International Journal of Game Theory}, 1(1):11--26, December
  1971.

\end{thebibliography}

\end{document}